\newtheorem{thm}{Theorem}[section]
\newtheorem{rema}[thm]{Remark}
\title{\bf On Divisibility Property of Type 2 $(p,q)$-Analogue of $r$-Whitney Numbers\\ of the Second Kind}
\author{\large{\bf Roberto B. Corcino}\\
{\bf Cristina B. Corcino}\\
Research Institute for Computational\\ 
Mathematics and Physics\\
	Cebu Normal University\\
	Cebu City, Philippines 6000\\
	{\tt rcorcino@yahoo.com}  
}
\begin{document}

\maketitle

\begin{abstract}
	In this paper, the divisibility property of the type 2 $(p,q)$-analogue of the $r$-Whitney numbers of the second kind is established. More precisely, a congruence relation modulo $pq$ for this $(p,q)$-analogue is derived.

\bigskip
\noindent{\bf Keywords}: $r$-Whitney numbers, $r$-Dowling numbers, $A$-tableaux, convolution identities, binomial transform, Hankel tranform.
\end{abstract}

\section{Introduction}
The $r$-Whitney numbers of the second kind were introduced by Mezo \cite{mezo} as coefficients of the following generating function:
 \begin{equation*}
(mx+r)^n=\sum_{k=0}^nm^kW_{m,r}(n,k)x^{\underline{k}},
\end{equation*}
where $x^{\underline{k}} = x(x-1)\ldots (x-k+1)$. These numbers satisfy the following properties:
\begin{enumerate}
\item the exponential generating function
$$\sum_{n=0}^{\infty}W_{m,r}(n,k)\frac{z^n}{n!}=\frac{e^{rz}}{k!}\left(\frac{e^{mz}-1}{m}\right)^k,$$
\item the explicit formula
$$W_{m,r}(n,k)=\frac{1}{m^kk!}\sum_{i=0}^k\binom{k}{i}(-1)^{k-i}(mi+r)^n,$$
\item the triangular recurrence relation
$$W_{m,r}(n,k)=W_{m,r}(n-1,k-1)+(km+r)W_{m,r}(n-1,k).$$
\end{enumerate}
These properties are exactly the same properties that the $(r,\beta)$-Stirling numbers in \cite{Cor-2} have possessed. This implies that the $r$-Whitney numbers of the second kind and the $(r,\beta)$-Stirling numbers are equivalent. More properties of these numbers can be found in \cite{Cor-2, mezo, CORCA}.

\smallskip
One of the early studies on $q$-analogue of Stirling numbers of the second kind was introduced by Carlitz in \cite{Car1} in connection with a problem in abelian groups. This is known as $q$-Stirling numbers of the second kind and is defined in terms of the following recurrence relation
$$S_q[n,k]=S_q[n-1,k-1]+[k]_qS_q[n-1,k], \;\;\;[k]_q=\frac{1-q^k}{1-q}$$
such that, when $q\to1$, this gives the triangular recurrence relation for the classical Stirling numbers of the second kind $S(n,k)$
$$S(n,k)=S(n-1,k-1)+kS(n-1,k).$$
Another version of definition of this $q$-analogue was adapted in \cite{Ehr} as follows
\begin{equation}\label{qSNEhr}
S_q[n,k]=q^{k-1}S_q[n-1,k-1]+[k]_qS_q[n-1,k].
\end{equation}
Through this definition, the Hankel transform of $q$-exponential polynomials and numbers was successfully established, which may be considered as the Hankel transform of a certain $q$-analogue of Bell polynomials and numbers. 

\smallskip
There are many ways to define $q$-analogue of Stirling-type and Bell-type numbers (see \cite{CORCHT, Corcino1, jen, rb4, rb3, rb5}). However, in the desire to establish the Hankel transform of $q$-analogue of generalized Bell numbers, Corcino et al. \cite{joy} were motivated to define a $q$-analogue of $r$-Whitney numbers of the second kind parallel to that in (\ref{qSNEhr}) as follows:
\begin{equation}\label{qrDN}
W_{m,r}[n,k]_{q}= q^{m(k-1)-r}W_{m,r}[n-1,k-1]_q+[mk-r]_{q} W_{m,r}[n-1,k]_{q}.
\end{equation}
Two more forms of this $q$-analogue, denoted by $W^*_{m,r}[n,k]_q$ and $\widetilde{W}_{m,r}[n,k]_q$, were respectively defined by
\begin{align*}
W^*_{m,r}[n,k]_q&:=q^{-kr+m\binom{k}{2}}W_{m,r}[n,k]_q,\\
\widetilde{W}_{m,r}[n,k]_q&:= q^{-kr}W^*_{m,r}[n,k]_q=q^{-m\binom{k}{2}}W_{m,r}[n,k].
\end{align*}
The corresponding $q$-analogues of generalized Bell numbers, also known as $q$-analogues of $r$-Dowling numbers, were also defined in three forms as (see  \cite{cor2, joy, cor})
\begin{align*}
{D}_{m,r}[n]_q&:=\sum_{k=0}^nW_{m,r}[n,k]_q,\\
{D}^*_{m,r}[n]_q&:=\sum_{k=0}^nW^*_{m,r}[n,k]_q,
\end{align*}
and
\begin{equation*}
\widetilde{D}_{m,r}[n]_q:=\sum_{k=0}^n\widetilde{W}_{m,r}[n,k]_q.
\end{equation*}
where ${D}_{m,r}[n]_q, {D}^*_{m,r}[n]_q$ and $\widetilde{D}_{m,r}[n]_q$ denote the first, second and third form of the $q$-analogues of $r$-Dowling numbers, respectively. The Hankel transforms of ${D}_{m,r}[n]_q, {D}^*_{m,r}[n]_q$ and $\widetilde{D}_{m,r}[n]_q$ were successfully established in \cite{cor, cor2, joy}.

\smallskip
To extend these research studies, a certain $(p,q)$-analogue of $r$-Whitney numbers of the second kind, denoted by $W_{m,r}[n,k]_{p,q}$, was defined in \cite{ame} as coefficients of the following generating function:
 \begin{equation}
[mt+r]_{p,q}^n=\sum_{k=0}^n W_{m,r}[n,k]_{p,q}[m t|m]_{p,q}^{\underline{k}}\label{def2}
\end{equation}	
where
\begin{equation}
[t|m]_{p,q}^{\underline{n}}=\prod_{j=0}^{n-1}[t-jm]_{p,q}.
\end{equation}
The orthogonality and inverse relations, an explicit formula, and a kind of exponential generating function of $W_{m,r}[n,k]_{p,q}$ were already obtained. Unfortunately, its Hankel transform was not successfully established using the method applied in \cite{cor, cor2, joy}. This motivated Corcino et al. \cite{ame-1} to define the type 2 $(p,q)$-analogue of $r$-Whitney numbers of the second kind, denoted by $W_{m,r}[n,k;t]_{p,q}$, as follows: 
\begin{equation}\label{tri2}
W_{m,r}[n+1,k;t]_{p,q}=q^{m(k-1)+r}W_{m,r}[n,k-1;t]_{p,q}+[mk+r]_{p,q}p^{mt-km}W_{m,r}[n,k;t]_{p,q}.
\end{equation}
The second form was then defined as follows:
\begin{equation}\label{def2ndform}
W_{m,r}^*[n,k;t]_{p,q}:=q^{-kr-m\binom{k}{2}}W_{m,r}[n,k;t]_{p,q}.
\end{equation}
Several properties of these $(p,q)$-analogues were established in \cite{ame-1} including their Hankel transforms, which are given by
\begin{align*}
\det{(W_{m,r}[s+i+j,s+j;t]_{p,q})_{0\leq i,j \leq n}}&=\prod_{k=0}^nq^{m\binom{s+k}{2}+(s+k)r}p^{nmt}[m(s+k)+r]_{p,q}^k\\
\det(W^*_{m,r}[s+i+j,s+j;t]_{p,q})_{0\leq i,j \leq n}&=\prod_{k=0}^np^{nmt}[m(s+k)+r]^k_{p,q}.
\end{align*}
On the other hand, the first, second and third forms of type 2 $(p,q)$-analogue of the $r$-Dowling numbers, denoted by $D_{m,r}[n]_{p,q}$, $D^*_{m,r}[n]_{p,q}$ and $\widetilde{D}_{m,r}[n]_{p,q}$ were defined respectively in \cite{ame-1} as follows:
\begin{align*}
	D_{m,r}[n]_{p,q}&:=\sum_{k=0}^nW_{m,r}[n,k;t]_{p,q},\\
	D^*_{m,r}[n]_{p,q}&:=\sum_{k=0}^nW^*_{m,r}[n,k;t]_{p,q},\\
	\widetilde{D}_{m,r}[n]_{p,q}&:=\sum_{k=0}^n\widetilde{W}_{m,r}[n,k;t]_{p,q},
\end{align*}
where $\widetilde{W}_{m,r}[n,k;t]_{p,q}=q^{kr}W^*_{m,r}[n,k;t]_{p,q}$ denotes the third form of the $(p,q)$-analogue of the $r$-Whitney numbers of the second kind. Among these three forms, only the second form was provided a Hankel transform, which is given by
\begin{equation*}
H(D^*_{m,r}[n]_{p,q})=\left(\frac{q}{p}\right)^{\frac{n(n^2+3n+8)}{6}+r-1\binom{n}{2}}([m]_{\frac{q}{p}})^{\binom{n}{2}}\prod_{k=0}^{n-1}[k]_{\left(\frac{q}{p}\right)^m}!.
\end{equation*}

\smallskip
The main objective of this study is to establish additional property of the type 2 $(p,q)$-analogues of the $r$-Whitney numbers of the second kind. More precisely, the divisibility property of these type 2 $(p,q)$-analogues will be discussed thoroughly.

\section{Preliminary Results}
This section provides a brief discussion on some relations that are necessary in deriving the divisibility property of the type 2 $(p,q)$-analogue of the $r$-Whitney numbers of the second kind $W^*_{m,r}[n,k;t]_{p,q}$.

\smallskip
Multiplying both sides of the recurrence relation in \eqref{tri2} by $q^{-kr-m\binom{k}{2}}$ yields
\begin{align*}
q^{-kr-m\binom{k}{2}}W_{m,r}[n+1,k;t]_{p,q}&=q^{-kr-m\binom{k}{2}}q^{m(k-1)+r}W_{m,r}[n,k-1;t]_{p,q}\\
&\;\;\;\;+q^{-kr-m\binom{k}{2}}[mk+r]_{p,q}p^{mt-km}W_{m,r}[n,k;t]_{p,q}\\
q^{-kr-m\binom{k}{2}}W_{m,r}[n+1,k;t]_{p,q}&=q^{-(k-1)r-m\binom{k-1}{2}}W_{m,r}[n,k-1;t]_{p,q}\\
&\;\;\;\;+[mk+r]_{p,q}p^{mt-km}q^{-kr-m\binom{k}{2}}W_{m,r}[n,k;t]_{p,q}.
\end{align*}
Applying \eqref{def2ndform} consequently gives
\begin{equation}\label{rec-2ndform}
W^*_{m,r}[n+1,k;t]_{p,q}=W^*_{m,r}[n,k-1;t]_{p,q}+[mk+r]_{p,q}p^{mt-km}W^*_{m,r}[n,k;t]_{p,q}.
\end{equation}
This relation can be used to generate the following first few values of $W^*_{m,r}[n,k;t]_{p,q}$:
\begin{center}
\begin{tabular}{|c|c|c|c|c|}
\hline $n/k$ & $0$ & $1$ & $2$ & $3$\\
\hline $0$ & $1$ & & &\\
\hline $1$ & $[r]_{p,q}p^{mt}$ & $1$ & &\\
\hline $2$ & $[r]_{p,q}^{2}p^{2mt}$ & $[r]_{p,q}p^{mt}+[m+r]_{p,q}p^{m(t-1)}$ & $1$ &\\
\hline $3$ & $[r]_{p,q}^{3}p^{3mt}$ & $[r]_{p,q}^{2}p^{2mt}+[r]_{p,q}[m+r]_{p,q}p^{m(2t-1)}$ & $[r]_{p,q}p^{mt}+2[m+r]_{p,q}p^{m(t-1)}$ & $1$\\
\hline $$ & $$ & $+[m+r]_{p,q}^{2}p^{2m(t-1)}$ &  & \\
\hline
\end{tabular}

\smallskip
{Table 1. The First Values of $W^*_{m,r}[n,k;t]_{p,q}$}
\end{center} 

By repeated application of \eqref{rec-2ndform}, we can easily derive the following vertical recurrence relation.
\begin{thm}
For nonnegative integers $n$ and $k$, and real number $r$, the  $(p,q)$-analogue  of $r$-Whitney numbers of the second kind satisfies the following vertical recurrence relation
\begin{equation}\label{vrr}
W^*_{m,r}[n+1,k+1;t]_{p,q}=\sum_{j=k}^n [m(k+1)+r]_{p,q}^{n-j}p^{(n-j)[mt-(k+1)m]}W^*_{m,r}[j,k;t]_{p,q}.
\end{equation}
\end{thm}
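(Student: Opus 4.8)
The plan is to prove the identity by a direct induction on $n$ with $k$ held fixed, using the three-term recurrence \eqref{rec-2ndform} as the only input. First I would rewrite \eqref{rec-2ndform} with the index $k$ replaced by $k+1$, obtaining
\begin{equation*}
W^*_{m,r}[n+1,k+1;t]_{p,q}=W^*_{m,r}[n,k;t]_{p,q}+[m(k+1)+r]_{p,q}p^{mt-(k+1)m}W^*_{m,r}[n,k+1;t]_{p,q}.
\end{equation*}
To keep the bookkeeping clean I would abbreviate $A:=[m(k+1)+r]_{p,q}p^{mt-(k+1)m}$, so that the factor $[m(k+1)+r]_{p,q}^{n-j}p^{(n-j)[mt-(k+1)m]}$ appearing in the claim is precisely $A^{n-j}$, and the asserted identity reads $W^*_{m,r}[n+1,k+1;t]_{p,q}=\sum_{j=k}^n A^{n-j}W^*_{m,r}[j,k;t]_{p,q}$.

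For the base case $n=k$ the right-hand side collapses to the single term $A^0 W^*_{m,r}[k,k;t]_{p,q}=W^*_{m,r}[k,k;t]_{p,q}$, while the rewritten recurrence at $n=k$ gives $W^*_{m,r}[k+1,k+1;t]_{p,q}=W^*_{m,r}[k,k;t]_{p,q}+A\,W^*_{m,r}[k,k+1;t]_{p,q}$; these agree because of the vanishing convention $W^*_{m,r}[k,k+1;t]_{p,q}=0$ valid since $k+1>k$, consistent with the empty upper-triangular entries in Table 1. For the inductive step I would assume the identity for a given $n\ge k$ and apply the rewritten recurrence with $n$ replaced by $n+1$, namely $W^*_{m,r}[n+2,k+1;t]_{p,q}=W^*_{m,r}[n+1,k;t]_{p,q}+A\,W^*_{m,r}[n+1,k+1;t]_{p,q}$. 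Substituting the induction hypothesis into the last term multiplies every summand by one more factor of $A$, turning $\sum_{j=k}^n A^{n-j}W^*_{m,r}[j,k;t]_{p,q}$ into $\sum_{j=k}^n A^{n+1-j}W^*_{m,r}[j,k;t]_{p,q}$, while the remaining term $W^*_{m,r}[n+1,k;t]_{p,q}=A^{0}W^*_{m,r}[n+1,k;t]_{p,q}$ is exactly the missing $j=n+1$ summand. The two pieces reassemble into $\sum_{j=k}^{n+1}A^{n+1-j}W^*_{m,r}[j,k;t]_{p,q}$, completing the induction.

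I do not anticipate any genuine obstacle here: the statement is a standard unrolling of a two-index linear recurrence in which one index is frozen, and the only points requiring care are the index shift $k\mapsto k+1$ (so that the multiplier $A$ carries the argument $k+1$ rather than $k$, and the exponent of $p$ reads $mt-(k+1)m$) and the correct identification of the boundary term $W^*_{m,r}[k,k+1;t]_{p,q}=0$ that anchors the base case. Everything else is the routine telescoping of the geometric factors $A^{n-j}$.
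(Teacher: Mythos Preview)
Your proof is correct and is exactly the approach the paper takes: the paper simply says the identity follows ``by repeated application of \eqref{rec-2ndform},'' and your induction on $n$ is precisely the rigorous formalization of that unrolling. The substitution $k\mapsto k+1$, the abbreviation $A$, and the boundary value $W^*_{m,r}[k,k+1;t]_{p,q}=0$ are all handled correctly.
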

\noindent One can easily verify relation \eqref{vrr} using the values of $W^*_{m,r}[n,k;t]_{p,q}$ in Table 1.

\bigskip
Now, let us derive the rational generating function for $W^*_{m,r}[n,k;t]_{p,q}$. Suppose that 
$$\Psi^*_k(x)=\sum_{n=k}^{\infty}W^*_{m,r}[n,k;t]_{p,q}x^{n-k}.$$
When $k=0$, \eqref{rec-2ndform} reduces to
$$W^*_{m,r}[n+1,0;t]_{p,q}=[r]_{p,q}p^{mt}W^*_{m,r}[n,0;t]_{p,q}.$$
By repeated application of \eqref{rec-2ndform}, this inductively gives
\begin{align*}
W^*_{m,r}[n+1,0;t]_{p,q}&=[r]_{p,q}p^{mt}W^*_{m,r}[n,0;t]_{p,q}=\left([r]_{p,q}p^{mt}\right)^2W^*_{m,r}[n-1,0;t]_{p,q}\\
&\vdots\\
&=\left([r]_{p,q}p^{mt}\right)^{n+1}W^*_{m,r}[0,0;t]_{p,q}=\left([r]_{p,q}p^{mt}\right)^{n+1}.
\end{align*}
Hence,
$$\Psi^*_0(x)=\sum_{n=0}^{\infty}W^*_{m,r}[n,0;t]_{p,q}x^n=\frac{1}{(1-xp^{mt}[r]_{p,q})}.$$
When $k>0$ and applying the triangular recurrence relation in \eqref{tri2}, we have
	\begin{align*}
	\Psi^*_k(x)&=\sum_{n=k}^{\infty}W^*_{m,r}[n,k;t]_{p,q}x^{n-k} \\
	&=\sum_{n-1=k-1}^{\infty}W^*_{m,r}[n-1,k-1;t]_{p,q}x^{(n-1)(k-1)}\\
	& \;\;\;\;\;\;\;\;\;+xp^{mt-km}[mk+r]_{p,q}\sum_{n-1=k}^{\infty}W^*_{m,r}[n-1,k;t]_{p,q}x^{n-1-k}\\
	&=\Psi^*_{k-1}(x)+xp^{m(t-k)}[mk+r]_{p,q}\Psi^*_k(x)
	\end{align*}
Solving for $\Psi^*_k(t)$ yields
$$\Psi^*_k(x)=\frac{1}{1-xp^{m(t-k)}[mk+r]_{p,q}}\Psi^*_{k-1}(x).$$
Applying backward substitution gives the following rational generating function for $W_{m,r}[n,k;t]_{p,q}$.
	
\begin{thm} \label{thmrat}For nonnegative integers $n$ and $k$, and real number $r$, the  $(p,q)$-analogue $W_{m,r}[n,k;t]_{p,q}$ satisfies the following rational generating
	function
	\begin{equation}\label{Wpqrat}
	\Psi^*_k(x)=\sum_{n=k}^{\infty}W^*_{m,r}[n,k;t]_{p,q}x^{n-k}=\frac{1}{\prod_{j=0}^{k}(1-xp^{m(t-j)}[mj+r]_{p,q})}.
	\end{equation}
\end{thm}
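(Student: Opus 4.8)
The plan is to prove \eqref{Wpqrat} by induction on $k$, using the two ingredients assembled immediately before the statement: the closed form of the base case $\Psi^*_0(x)$ and the one-step recurrence relating $\Psi^*_k(x)$ to $\Psi^*_{k-1}(x)$. The conceptual work of translating the triangular recurrence \eqref{tri2} (equivalently \eqref{rec-2ndform}) into a functional recurrence for the generating function has already been done, so what remains is a clean iteration.

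First I would record the base case. The computation preceding the theorem shows that $W^*_{m,r}[n,0;t]_{p,q}=\left([r]_{p,q}p^{mt}\right)^n$, whence $\Psi^*_0(x)=\sum_{n\geq 0}\left([r]_{p,q}p^{mt}\right)^n x^n=\frac{1}{1-xp^{mt}[r]_{p,q}}$, read as an identity of formal power series (equivalently valid for $|x|$ small). Since the single-factor product $\prod_{j=0}^{0}(1-xp^{m(t-j)}[mj+r]_{p,q})$ is exactly $1-xp^{mt}[r]_{p,q}$, formula \eqref{Wpqrat} holds for $k=0$.

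Next, for the inductive step, I would assume \eqref{Wpqrat} for $k-1$ and apply the recurrence $\Psi^*_k(x)=\frac{1}{1-xp^{m(t-k)}[mk+r]_{p,q}}\Psi^*_{k-1}(x)$ derived above. Substituting the inductive hypothesis gives
\[
\Psi^*_k(x)=\frac{1}{1-xp^{m(t-k)}[mk+r]_{p,q}}\cdot\frac{1}{\prod_{j=0}^{k-1}(1-xp^{m(t-j)}[mj+r]_{p,q})}=\frac{1}{\prod_{j=0}^{k}(1-xp^{m(t-j)}[mj+r]_{p,q})},
\]
since the newly introduced factor is precisely the $j=k$ term of the product. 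This is the claimed expression, completing the induction. Equivalently, one may simply iterate (\emph{backward-substitute}) the recurrence from $\Psi^*_k$ down to $\Psi^*_0$ and collect the resulting $k+1$ denominators.

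I expect the only points demanding care to be bookkeeping ones, and these are where the main (mild) obstacle lies. In re-deriving the functional recurrence one must verify that the index shift $n\mapsto n-1$ in the sum defining $\Psi^*_k$ yields the exponent $x^{(n-1)-(k-1)}=x^{n-k}$ correctly, and that the factor pulled out of the second sum is exactly $xp^{m(t-k)}[mk+r]_{p,q}\Psi^*_k(x)$; here one uses that $W^*_{m,r}[n,k;t]_{p,q}=0$ for $n<k$ so the reindexed sum genuinely starts at the correct term. An off-by-one error at this stage would silently shift the product, so this is the step I would check most carefully, while treating the whole identity formally so that no convergence question obstructs the manipulation.
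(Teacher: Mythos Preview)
Your proposal is correct and follows essentially the same approach as the paper: establish the base case $\Psi^*_0(x)=\frac{1}{1-xp^{mt}[r]_{p,q}}$, derive the one-step recurrence $\Psi^*_k(x)=\frac{1}{1-xp^{m(t-k)}[mk+r]_{p,q}}\Psi^*_{k-1}(x)$ from \eqref{rec-2ndform}, and then iterate (backward-substitute) to obtain the product form. The paper phrases the last step as ``applying backward substitution'' rather than formal induction, but the content is identical.
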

\begin{rema}
This rational generating function plays an important role in proving the main result of the paper.
\end{rema}

\section{Divisibility Property}

In this section, the congruence relation modulo $pq$ for the type 2 $(p,q)$-analogue of the $r$-Whitney numbers of the second kind $W^*_{m,r}[n,k;t]_{p,q}$ will be established using the rational generating function in \eqref{Wpqrat}. 

\smallskip
Using the values of $W^*_{m,r}[n,k;t]_{p,q}$ in Table 1, we observe that, with 
$$[t]_{p,q}=p^{t-1}+p^{t-2}q+p^{t-3}q^2+\ldots +pq^{t-2}+q^{t-1},$$
the polynomial expressions of $W_{m,r}^{*}[n,k]_{q}$ from row 0 to row 3, if they are divided by $pq$, the remainders form the following triangle of expressions in $p$:
\begin{center}
$1$\\
$p^{mt+r-1}\;\;\;\;\;\; 1$\\
$p^{2(mt+r-1)}\;\;\;\;\;\; 2p^{mt+r-1}\;\;\;\;\;\; 1$\\
$p^{3(mt+r-1)}\;\;\;\;\;\; 3p^{2(mt+r-1)}\;\;\;\;\;\; 3p^{mt+r-1}\;\;\;\;\;\; 1$. 
\end{center}
This can further be written as 
\begin{center}
$\binom{0}{0}$\\
$\binom{1}{0}p^{mt+r-1}\;\;\;\;\;\; \binom{1}{1}$\\
$\binom{2}{0}p^{2(mt+r-1)}\;\;\;\;\;\; \binom{2}{1}p^{mt+r-1}\;\;\;\;\;\; \binom{2}{2}$\\
$\binom{3}{0}p^{3(mt+r-1)}\;\;\;\;\;\; \binom{3}{1}p^{2(mt+r-1)}\;\;\;\;\;\; \binom{3}{2}p^{mt+r-1}\;\;\;\;\;\; \binom{3}{3}$, 
\end{center}
To generalize this observation, the next theorem contains the divisibility property of $W^*_{m,r}[n,k;t]_{p,q}$.

\begin{thm}\label{div}
For nonnegative integers $n$ and $k$, the type 2 $(p,q)$-analogue of the $r$-Whitney numbers of the second kind $W_{m,r}[n,k;t]_{p,q}$  satisfies the following congruence relation
\begin{equation}\label{pqrWhitney_div}
W^*_{m,r}[n,k;t]_{p,q}\equiv\binom{n}{k}p^{(n-k)(mt+r-1)} \mod pq.
\end{equation}
\end{thm}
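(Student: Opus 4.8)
The plan is to read the congruence directly off the rational generating function in Theorem~\ref{thmrat}, after reducing that generating function modulo $pq$. The engine of the whole argument is a single reduction: modulo $pq$, every factor appearing in the denominator of \eqref{Wpqrat} collapses to the \emph{same} linear factor, independent of $j$.

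First I would record how the $(p,q)$-bracket behaves modulo $pq$. Expanding $[mj+r]_{p,q}=\sum_{i=0}^{mj+r-1}p^{mj+r-1-i}q^i$ and multiplying through by $p^{m(t-j)}$ gives the telescoping identity
$$p^{m(t-j)}[mj+r]_{p,q}=\sum_{i=0}^{mj+r-1}p^{mt+r-1-i}q^i.$$
The term $i=0$ contributes $p^{mt+r-1}$, while every term with $i\ge 1$ carries both a positive power of $q$ and a positive power of $p$: the smallest $p$-exponent, occurring at $i=mj+r-1$, equals $m(t-j)$, which is positive as soon as $t\ge k+1$ (so that $t-j\ge 1$ for all $j\le k$). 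Each such term is therefore divisible by $pq$, and we obtain the uniform reduction
$$p^{m(t-j)}[mj+r]_{p,q}\equiv p^{mt+r-1}\pmod{pq},\qquad 0\le j\le k.$$

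With this in hand the denominator reduces cleanly. Since reduction modulo $pq$ is a ring homomorphism $\mathbb{Z}[p,q]\to\mathbb{Z}[p,q]/(pq)$ that extends coefficientwise to formal power series in $x$, and since each factor $1-xp^{m(t-j)}[mj+r]_{p,q}$ has constant term $1$ (a unit), inverses are carried to inverses. Hence
$$\Psi^*_k(x)=\frac{1}{\prod_{j=0}^{k}\bigl(1-xp^{m(t-j)}[mj+r]_{p,q}\bigr)}\equiv\frac{1}{\bigl(1-xp^{mt+r-1}\bigr)^{k+1}}\pmod{pq}.$$
Expanding the right-hand side by the negative binomial series $\tfrac{1}{(1-u)^{k+1}}=\sum_{l\ge 0}\binom{k+l}{k}u^l$ with $u=xp^{mt+r-1}$ yields
$$\Psi^*_k(x)\equiv\sum_{l=0}^{\infty}\binom{k+l}{k}p^{l(mt+r-1)}x^{l}\pmod{pq}.$$
Comparing the coefficient of $x^{n-k}$ on both sides, recalling that $\Psi^*_k(x)=\sum_{n\ge k}W^*_{m,r}[n,k;t]_{p,q}x^{n-k}$, and setting $l=n-k$ gives exactly $W^*_{m,r}[n,k;t]_{p,q}\equiv\binom{n}{k}p^{(n-k)(mt+r-1)}\pmod{pq}$.

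The binomial-series expansion is routine; the step needing the most care is the reduction lemma, specifically the bookkeeping ensuring that no surviving monomial of $p^{m(t-j)}[mj+r]_{p,q}$ is a pure power of $q$. This is precisely where positivity of the exponents $m(t-j)$ — the hypothesis $t\ge k+1$ — enters, since for small $t$ a pure term $q^{mt+r-1}$ can survive and would add an extra summand to the congruence. A secondary point I would state explicitly is the legitimacy of inverting the reduced denominator as a power series, which is why I invoke the homomorphism property together with the unit constant term rather than manipulating the congruence coefficient by coefficient after inversion.
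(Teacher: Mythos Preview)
Your argument is essentially the paper's own: reduce each factor $1-xp^{m(t-j)}[mj+r]_{p,q}$ modulo $pq$ to the common factor $1-xp^{mt+r-1}$, invert to $(1-xp^{mt+r-1})^{-(k+1)}$, expand by the negative binomial series, and read off coefficients. Your write-up is in fact more careful than the paper's on two points it leaves implicit --- the need for $t\ge k+1$ (so that the term $p^{m(t-j)}q^{mj+r-1}$ is genuinely divisible by $pq$) and the legitimacy of passing the congruence through the inverse via the ring homomorphism $\mathbb{Z}[p,q]\to\mathbb{Z}[p,q]/(pq)$ extended to power series.
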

\begin{proof}
The polynomial $[t]_{p,q}$ can be written as
$$[t]_{p,q}=p^{t-1}+q^{t-1}+pqy,$$
where $y$ is a polynomial in $p$ and $q$. Then, we have
\begin{align*}
\frac{1}{\prod_{j=0}^{k}(1-xp^{m(t-j)}[mj+r]_{p,q})}&=\sum_{n=0}^{\infty}\left(xp^{m(t-j)}[mj+r]_{p,q}\right)^n\\
&=\sum_{n=0}^{\infty}p^{nm(t-j)}(p^{mj+r-1}+q^{mj+r-1}+pqy)^nx^n\\
&=\sum_{n=0}^{\infty}p^{n(mt+r-j)}x^n+pq\sum_{n=0}^{\infty}\hat{z}_nx^n,
\end{align*}
where $\hat{z}_n$ is a polynomial in $p$ and $q$. It follows that
\begin{align*}
\frac{1}{\prod_{j=0}^{k}(1-xp^{m(t-j)}[mj+r]_{p,q})}&=\sum_{n=0}^{\infty}p^{n(mt+r-j)}x^n \mod pq\\
&=\frac{1}{1-p^{mt+r-1}x} \mod pq.
\end{align*}
Thus, using \eqref{Wpqrat}, we have
\begin{align*}
\sum_{n=k}^{\infty}W^*_{m,r}[n,k;t]_{p,q}x^{n-k}&\equiv\frac{1}{(1-p^{mt+r-1}x)^{k+1}} \mod pq\\
&\equiv \sum_{n=0}^{\infty} \binom{n+(k+1)-1}{n}p^{n(mt+r-1)}x^n  \mod pq\\
&\equiv \sum_{n=k}^{\infty} \binom{n}{k}p^{(n-k)(mt+r-1)}x^{n-k}  \mod pq.
\end{align*}
Comparing the coefficients of $x^{n-k}$ completes the proof of the theorem.
\end{proof}
\begin{rema}
Using \eqref{def2ndform} and Theorem \ref{div}, the first form of the of the type 2 $(p,q)$-analogues of the $r$-Whitney numbers of the second kind satisfies the following congruence relation modulo $pq$:
\begin{align*}
W_{m,r}[n,k;t]_{p,q}&\equiv \binom{n}{k}p^{(n-k)(mt+r-1)}q^{kr+m\binom{k}{2}} \mod pq\\
&\equiv 
\begin{cases}
    q^{nr+m\binom{n}{2}} \mod pq, & \text{for } n=k \\
    0 \mod pq, & \text{otherwise } 
\end{cases}
\end{align*}
\end{rema}

\end{document}